\documentclass{article}
\usepackage{amsfonts, amssymb, amsthm,graphicx,amsmath,amsfonts,color}

\pdfoutput=1
\DeclareMathOperator{\grad}{grad}

\theoremstyle{plain}
\newtheorem{teorema}{Theorem}[section]
\newtheorem{proposicion}[teorema]{Proposition}
\newtheorem{lema}[teorema]{Lemma}
\newtheorem{corolario}[teorema]{Corollary}

\theoremstyle{definition}
\newtheorem{definicion}[teorema]{Definition}
\newtheorem{remark}[teorema]{Remark}

\begin{document}

\title{Null screen isoparametric hypersurfaces in Lorentzian space forms}

\author{Matias Navarro\footnote{Corresponding author. Facultad de Matem\'aticas, Universidad Aut\'onoma de Yucat\'an, Perif\'erico Norte, Tablaje 13615, M\'erida, Yucat\'an, MEXICO. Partially supported by CONACYT grant 457490 under Project FMAT-2016-0013 of UADY. matias.navarro@correo.uady.mx} \and Oscar Palmas\footnote{Departamento de Matem\'aticas, Facultad de Ciencias, UNAM. CP 04510, Ciudad de M\'exico, MEXICO. Partially supported by UNAM under Project PAPIIT-DGAPA IN113516. oscar.palmas@ciencias.unam.mx} \and Didier Solis\footnote{Facultad de Matem\'aticas, Universidad Aut\'onoma de Yucat\'an, Perif\'erico Norte, Tablaje 13615, M\'erida, Yucat\'an, MEXICO. Partially supported by UADY under Project PROFOCIE 2015-12-1918. didier.solis@correo.uady.mx} }

%
%
%
%
%
%
%
%
%
%

\maketitle

\begin{abstract}
{In this paper we develop the notion of screen isoparametric hypersurface for null hypersurfaces of Robertson-Walker spacetimes. Using this formalism we derive Cartan identities for the screen principal curvatures of null screen hypersurfaces in Lorentzian space forms and provide a local characterization of such hypersurfaces.}
\end{abstract}

\section{Introduction}

The study of submanifold geometry is as old as the field of differential geometry itself, dating back to the work of Gauss. Among the most important results, we find the classification of submanifolds that satisfy certain geometric restrictions, often times related to the appearance of high degrees of symmetry. In the context of Lorentzian Geometry and General Relativity, there is a special kind of submanifolds in which the classical theory falls apart, namely, those in which the restriction of the ambient spacetime metric is degenerate. In particular, many of the most remarkable features of Relativity are expressed geometrically in terms of degenerate or null hypersurfaces: event horizons, achronal boundaries, conformal infinity for asymptotically flat spacetimes, etc. \cite{MR1172768, MR0424186, MR757180} In recent years, a framework designed to study null submanifolds in the semi-Riemannian setting that closely resembles its Riemannian counterpart was developed by Duggal et al. \cite{MR1383318, MR2598375}. Following this seminal work, numerous concepts and results pertaining the classical theory have found analogs in the degenerate case. One of the most important class of solutions to the Einstein equations --present since the dawn of Relativity-- is the family of Generalized Robertson-Walker spacetimes, which model an isotropic and homogeneous universe  \cite{MR1172768, MR0424186, MR757180}. This family encompasses warped products with constant curvature fibers, including open portions of all Lorentzian space forms. In recent years, there has been a growing interest in the submanifold geometry of Generalized Robertson-Walker spacetimes \cite{MR3187030, MR3397377, MR2978423, MR3508919}.

In the Riemannian case, isoparametric hypersurfaces can be described in several different ways. Originally, isoparametric hypersurfaces arose in the study of wave propagation. In order to characterize wavefronts  that move at a constant speed, Laura  \cite{Laura} established the conditions $|\nabla F|^2=\varphi_1(F)$ and $\Delta F=\varphi_2 (F)$, where $F\colon \mathbb{R}^3\to\mathbb{R}$ is a smooth function describing the wave and $\varphi_i\colon\mathbb{R}\to\mathbb{R}$ are two smooth functions. The wavefronts, that is, the level sets $F^{-1}(k)$ of regular values of $F$ were dubbed isoparametric. As a consequence, isoparametric hypersurfaces have constant mean curvature and constant principal curvatures \cite{Somigliana, LeviCivita}. This latter property has become the standard geometrical definition of an \emph{isoparametric hypersurface}. The program of classifying  isoparametric hypersurfaces in Riemannian manifolds of constant curvature $c$ dates back to the work of Somigliana, Levi-Civita, Segre and Cartan \cite{Somigliana, LeviCivita, Segre, MR1553310, MR0000169, MR0004519} mainly in the late 1930's and it is still not complete to this day. The main breakthrough was due to Cartan who showed that for the $l$ distinct constant principal curvatures $\lambda_j$ of multiplicity $m_j$, 
\[
\sum_{j\neq i}m_j\frac{1+\lambda_i\lambda_j}{\lambda_i -\lambda_j}=0 ,\quad 1\le j\le l,
\] 
which are known thereafter as \emph{Cartan identities}. Using these relations, a complete classification for Riemannian space forms of nonpositive curvature was established \cite{MR1553310, MR0000169, MR0004519}. For $c=0$, any isoparametric hypersurface $M^n$ immersed in $\mathbb{R}^{n+1}$ has at most two distinct principal curvatures and it is an open portion of a hyperplane, a sphere or a cylinder over a sphere. For $c<0$, an isoparametric hypersurface $M^n\subset\mathbb{H}^{n+1}$ has also at most two distinct principal curvatures and it is an open portion of a a totally geodesic hypersurface, a horosphere, an equidistant surface or a tube over a totally geodesic submanifold. The positive curvature case has proven far more difficult to deal with. Cartan classified the cases with at most three distinct principal curvatures. The next crucial step came in the 1980's due to the work of M\"{u}nzner, \cite{Munzner1, Munzner2} who showed that the number $l$ of distinct principal curvatures must be either $1$, $2$, $3$, $4$ or $6$. The cases $n=4$ and $n=6$ have not been fully classified yet and, in fact, it is one of the problems in the influential S. T. Yau's list of outstanding problems in geometry \cite{Yau,CecilChiJensen,ChiIII,Miyaoka}.

The notion of isoparametric hypersurface has been generalized to different geometric scenarios, like complex manifolds \cite{Takagi,Montiel} and semi-Riemannian space forms \cite{MR753432}, to name a few. In the context of Lorentzian geometry, Nomizu established the first classification results for spacelike isoparametric hypersurfaces in Lorentzian space forms \cite{MR728336}. This classification continues to be refined even to this day \cite{LiXie,SMartin}. On the other hand, timelike (i.e. Lorentzian) isoparametric hypersurfaces were first studied by Magid in Lorentz-Minkowski spacetimes \cite{MR783023} and later on by Li and Wang \cite{MR2202686} and Xiao \cite{MR1696128} in de Sitter and anti de Sitter spacetimes, respectively. Notice that in this case the shape operator is not diagonalizable due to the Lorentzian character of the induced metric. Hence the notion of isoparametric hypersurfaces was broaden by requiring that the minimal polynomial of the shape operator has constant coefficients. The classification of such hypersurfaces is not complete either. Finally, when it comes to null hypersurfaces, Atindogbe et al \cite{MR3270005} have provided a first local result, namely, that a null screen conformal hypersurface in a Lorentzian space form must be locally a null triple product. 

In this work we present a general framework to deal with null isoparametric hypersurfaces in Robertson-Walker spacetimes and provide finer results than those reported in \cite{MR3270005}.  This article is organized as follows: in section 2 we introduce the preliminary results pertaining null submanifold theory, following closely the approach on \cite{MR1383318, MR2598375}. Then on section 3 we use the transnormal approach developed in \cite{MR3508919} to establish our framework for Generalized Robertson-Walker spacetimes. Finally, on section 4 we establish Cartan identities for null isoparametric hypersurfaces immersed in a Lorentzian space form and derive a local characterization result.

\section{Preliminaries}

We follow closely the notation and results in \cite{MR1383318} and \cite{MR2598375}. Let $(\bar M^{n+2},\bar g)$ be a $(n+2)$-dimensional, semi-Riemannian manifold with metric $\bar g$ of constant index $q\in\{1,\dots,n+1\}$. A hypersurface $M$ of $\bar M$ is {\em null} if the {\em radical bundle} $\mathrm{Rad}(TM)=TM\cap TM^\perp$ is different from zero at each $p\in M$; or equivalently, if the restriction $g$ of $\bar g$ to $M$ has rank $n$ everywhere.

A {\em screen distribution} $S(TM)$ on $M$ is a non-degenerate vector bundle complementary to $TM^\perp$. A null hypersurface with an specific screen distribution is denoted $(M,g,S(TM))$. From \cite{MR1383318}, we know that there is a vector bundle $\mathrm{tr}(TM)$ of rank $1$ over $M$, called the {\em transversal bundle}, such that for each non-zero section $\xi\in\Gamma(TM^\perp)$ defined in an open set $U\subset M$ there is a unique section $N\in\Gamma(\mathrm{tr}(TM))$ such that
\begin{equation}\label{eq:propiedadesdeN}
\bar g(\xi,  N )=1, \quad
\bar g( N , N )=\bar g( N ,X)=0
\end{equation}
for each $X\in\Gamma(S(TM\vert_{U}))$. We will work hereafter in a maximal neighborhood $U$ with these properties and omit the reference to it. We write
\begin{equation}  \label{eq:descomposicion0}
T\bar M\vert_M=TM\oplus \mathrm{tr}(TM).
\end{equation}
and\begin{equation}  \label{eq:descomposicion1}
TM=S(TM)\oplus_{\mathrm{orth}} \mathrm{Rad}(TM),
\end{equation}
so that
\begin{equation*}
T\bar M\vert_M=S(TM)\oplus_{\mathrm{orth}}(\mathrm{Rad}(TM)\oplus\mathrm{tr}(TM)).
\end{equation*}

Let us denote by $P$ the projection of $\Gamma(TM)$ onto $\Gamma(S(TM))$ using the decomposition (\ref{eq:descomposicion1}).

Now let $\bar\nabla$ be the Levi-Civita connection of $\bar M$. The \emph{local Gauss-Weingarten formulae} relative to the above decompositions are
\begin{equation}  \label{eq:gauss1}
\begin{array}{rcl}
\bar\nabla_{X}Y & = &  \nabla_{X}Y+h(X,Y) = \nabla_{X}Y+B(X,Y) N, \\[0.1cm]
\bar\nabla_{X} N & = & -A_{ N }X+\nabla_X^tN = -A_{ N }X+\tau(X)  N; \\[0.1cm]
\nabla_{X}PY & = & \nabla_{X}^*PY+h^*(X,PY) = \nabla_{X}^*PY+C(X,PY)\xi; \\[0.1cm]
\nabla_{X} \xi & = & -A_{\xi}^*X+\nabla_X^{*t} \xi= -A_{\xi}^*X-\tau(X) \xi, 
\end{array}
\end{equation}
where $X,Y\in\Gamma(TM)$. Here $\nabla$, $\nabla^t$, $\nabla^*$ and $\nabla^{*t}$ denote the induced connections on $TM$, $\mathrm{tr}(TM)$, $S(TM)$ and $\mathrm{Rad}(TM)$, respectively; $h$ and $h^*$ are the \emph{second fundamental forms} of $M$ and $S(TM)$, while
\begin{equation}\label{eq:2ffB}
B(X,Y) = \bar g( \bar\nabla_{X}Y, \xi ) = \bar g(h(X,Y),\xi)=g( A_\xi^*X,Y),
\end{equation}
\begin{equation}\label{eq:2ffC}
C(X,PY) = \bar g( \nabla_X PY, N)= \bar g(h^*(X,PY),\xi)=g( A_NX,PY),
\end{equation}
are the \emph{local second fundamental forms} of $M$ and $S(TM)$, $A_ N $ and $A_\xi^*$ are the {\em local shape operators} of $M$ and $S(TM)$ and $\tau$ is the $1$-form on $\Gamma(TM)$ given by
\begin{equation}\label{eq:tau}
\tau(X)=\bar g(\bar\nabla_X N ,\xi)=\bar g(\bar\nabla_X^t N ,\xi).
\end{equation}

\begin{proposicion}\label{prop:propiedades}
The following properties hold true:
\begin{enumerate}
\item $\bar g(A_NX,N)=0$ for every $X\in\Gamma(TM)$;
\item $A_\xi^*\xi=0$;
\item $A_\xi^*$ is symmetric relative to $g$, that is,
\[
g(A_ \xi^*X,Y)=g(X,  A_\xi^*Y)
\]
for each $X,Y\in\Gamma(TM)$.
\end{enumerate}
\end{proposicion}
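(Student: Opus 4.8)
The plan is to read off all three statements from metric compatibility and the torsion-freeness of the ambient Levi-Civita connection $\bar\nabla$, combined with the algebraic normalizations in (\ref{eq:propiedadesdeN}) and the Gauss-Weingarten apparatus (\ref{eq:gauss1})-(\ref{eq:tau}). Two facts will be used repeatedly: that $\xi\in\Gamma(TM^\perp)$ is null and annihilates all of $TM$, so $\bar g(\xi,\xi)=0$ and $\bar g(\xi,Z)=0$ for every $Z\in\Gamma(TM)$; and that $\bar g(N,N)=0$ with $\bar g(\xi,N)=1$. None of the three items is deep, so the work is in organizing the computations and in handling the degeneracy of $g$ in item (2).

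For item (1) I would start from the Weingarten equation $\bar\nabla_X N=-A_N X+\tau(X)N$, whence $\bar g(A_N X,N)=-\bar g(\bar\nabla_X N,N)+\tau(X)\bar g(N,N)$. The second term vanishes because $N$ is null, and the first vanishes because $\bar g(\bar\nabla_X N,N)=\tfrac12 X\,\bar g(N,N)=0$ by metric compatibility. This is immediate.

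The preparation for the remaining items is to observe that the second fundamental form $B$ is symmetric. Since $B(X,Y)=\bar g(\bar\nabla_X Y,\xi)$ and $\bar\nabla$ is torsion-free, $B(X,Y)-B(Y,X)=\bar g([X,Y],\xi)=0$, because $[X,Y]\in\Gamma(TM)$ and $\xi$ annihilates $TM$. Item (3) then falls out at once: by (\ref{eq:2ffB}) and the symmetry of $g$, $g(A_\xi^*X,Y)=B(X,Y)=B(Y,X)=g(A_\xi^*Y,X)=g(X,A_\xi^*Y)$.

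Item (2) is where the degeneracy of $g$ forces care, and I expect it to be the main obstacle: since $g$ is degenerate, knowing $g(A_\xi^*\xi,\cdot)\equiv0$ on $\Gamma(TM)$ does not by itself give $A_\xi^*\xi=0$. The remedy is to locate $A_\xi^*\xi$ inside the screen, where $g$ is non-degenerate. First I would confirm that $A_\xi^*X$ has trivial radical component for every $X$, equivalently $\bar g(A_\xi^*X,N)=0$: differentiating $\bar g(\xi,N)=1$ and substituting the Weingarten equations for $\xi$ and $N$ yields $\bar g(A_\xi^*X,N)=-\bar g(A_N X,\xi)$, while $\bar g(A_N X,\xi)=0$ is read off directly from the definition (\ref{eq:tau}) of $\tau$ (indeed $\tau(X)=\bar g(\bar\nabla_X N,\xi)=-\bar g(A_N X,\xi)+\tau(X)$). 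Hence $A_\xi^*X\in\Gamma(S(TM))$ for every $X$, and in particular $A_\xi^*\xi\in\Gamma(S(TM))$. Finally, taking the first slot of the symmetric form $B$ to be $\xi$ gives $B(\xi,W)=B(W,\xi)=\bar g(\bar\nabla_W\xi,\xi)=\tfrac12 W\,\bar g(\xi,\xi)=0$ for all $W\in\Gamma(S(TM))$, so by (\ref{eq:2ffB}) $g(A_\xi^*\xi,W)=0$ on the screen. Non-degeneracy of $g|_{S(TM)}$ together with $A_\xi^*\xi\in\Gamma(S(TM))$ then forces $A_\xi^*\xi=0$.
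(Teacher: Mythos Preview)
Your argument is correct. The paper itself does not give a proof of this proposition; it simply refers the reader to the monographs of Duggal--Bejancu and Duggal--Sahin. What you have written is essentially the standard derivation found there, so there is nothing to compare beyond noting that you have successfully reconstructed it.

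One minor simplification: in your treatment of item~(2) you work to show $\bar g(A_\xi^*X,N)=0$, i.e.\ that $A_\xi^*X$ lies in the screen. This is in fact built into the fourth Gauss--Weingarten formula $\nabla_X\xi=-A_\xi^*X+\nabla_X^{*t}\xi$, which is precisely the decomposition of $\nabla_X\xi\in\Gamma(TM)$ according to $TM=S(TM)\oplus_{\mathrm{orth}}\mathrm{Rad}(TM)$; thus $A_\xi^*X\in\Gamma(S(TM))$ by definition. Your verification via $\bar g(A_NX,\xi)=0$ is not wrong, just redundant. With that observation, item~(2) reduces immediately to $g(A_\xi^*\xi,W)=B(\xi,W)=B(W,\xi)=\tfrac12 W\bar g(\xi,\xi)=0$ on the screen, and non-degeneracy finishes it.
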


For the proofs, see the references \cite{MR1383318}, \cite{MR2598375} and \cite{MR719023}.

\section{Null hypersurfaces in GRW spacetimes}\label{sec:nullinwp}

In later sections we will study null hypersurfaces in Lorentzian space forms, but in order to motivate some of our definitions and results, here we consider the more general case when the ambient space $\bar M$ is a {\em Generalized Robertson-Walker (GRW) spacetime}; that is, a Lorentzian warped product of the form $-I\times_{\varrho} F$, where $F$ is a $(n+1)$-dimensional Riemannian manifold and $\varrho$ is a differentiable, positive function defined in a real interval $I\subset\mathbb R$. We recall also that a {\em classical Robertson-Walker spacetime} is a GRW spacetime where the fiber $F$ has constant curvature. In fact, (see \cite{MR1464181} or \cite[p. 267]{MR2371700}) we know that $\bar M$ has constant sectional curvature $\bar c$ if and only if $F$ has constant sectional curvature $c$ and
\begin{equation}\label{eq:ARS}
\frac{\varrho''}{\varrho} = \bar c = \frac{c+(\varrho')^2}{\varrho^2}.
\end{equation}

The standard examples are the Lorentz-Minkowski,  de Sitter and anti de Sitter spacetimes, whose definitions we recall for completeness. If $\mathbb R_k^m$ denotes the $m$-dimensional real vector space with metric of index $k$, then the {\em $(n+2)$-dimensional Lorentz-Minkowski spacetime} is $\mathbb R_1^{n+2}$; the {\em $(n+2)$-dimensional de Sitter space} is defined by
\[
\mathbb S_1^{n+2}=\{\,p\in\mathbb R_1^{n+3}\,\vert\,\bar g( p,p) =1\,\}
\]
while the {\em $(n+2)$-dimensional anti de Sitter space} is
\[
\mathbb H_1^{n+2}=\{\,p\in\mathbb R_2^{n+3}\,\vert\,\bar g( p,p) =-1\,\}.
\]

Each of these spaces may be modelled also as a warped product; clearly,
\[
\mathbb R_1^{n+2}=-\mathbb R\times\mathbb R^{n+1};
\]
while the de Sitter space is isometric to $-\mathbb R\times_{\cosh }\mathbb S^{n+1}$. In fact, the transformation
\begin{equation}\label{eq:isometria1}
-\mathbb R\times_{\cosh }\mathbb S^{n+1}\to\mathbb S_1^{n+2},\qquad (t,p)\mapsto(\sinh t,\cosh t\, p)
\end{equation}
is an isometry. Similarly, the isometry 
\begin{equation}\label{eq:isometria2}
-\mathbb R\times_{\cos }\mathbb H^{n+1}\to\mathbb H_1^{n+2},\qquad (t,p)\mapsto(\sin t,\cos t\, p).
\end{equation}
maps the warped product onto an unbounded open region of the anti de Sitter spacetime.

\medskip

Now, let us return to the general case of a GRW spacetime. We recall briefly some results from \cite{MR3508919}. Let $F$ be a Riemannian manifold and $f:F\to\mathbb R$ be a differentiable function. Then the graph of $f$ given as
\[
\{\,(f(p),p)\,\vert\,p\in F\,\}
\]
is a null hypersurface in $-I\times_{\varrho} F$ if and only if $f$ is a {\em transnormal function} satisfying
\begin{equation}\label{eq:transnormal}
\vert \grad f\vert =\varrho\circ f;
\end{equation}
we write $\varrho\circ f$ simply as $\varrho$. In this context, set
\[
\xi=\frac{1}{\sqrt{2}}\left(1,\frac{\grad f}{\varrho^2}\right).
\]

We choose the screen distribution $S^*(TM)$ as the family of tangent bundles of the level hypersurfaces $S_t=M\cap\left(\{t\}\times F\right)$ and
\[
N=\frac{1}{\sqrt{2}}\left(-1,\frac{\grad f}{\varrho^2}\right).
\]

\begin{lema}\label{lema:tau} $\tau(X)=0$ for any $X\in\Gamma(S^*(TM))$; in consequence, 
\begin{equation}\label{eq:transversales}
\nabla_X^tN=0,\quad\quad \nabla_X^{*t}\xi=0
\end{equation}
or equivalently, $\bar\nabla_XN=-A_NX$ and $\bar\nabla_X\xi=-A_\xi^*X$. Moreover, $A_N\xi=0$.
\end{lema}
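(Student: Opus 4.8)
The plan is to compute everything from the warped-product structure of $\bar M=-I\times_\varrho F$, using O'Neill's connection formulas for $\bar\nabla$. First I would record the two identities I will use repeatedly: for $V,W$ tangent to the fiber and $\partial_t$ the unit vector field along $I$, one has $\bar\nabla_V\partial_t=(\varrho'/\varrho)V$ and $\bar\nabla_V W=\nabla^F_VW+\varrho\varrho'\,g_F(V,W)\partial_t$, where $\nabla^F$ and $g_F$ are the connection and metric of $F$. I would then isolate the two structural facts forced by the choice of screen: since $S^*(TM)$ consists of vectors tangent to the slices $S_t\subset\{t\}\times F$, every $X\in\Gamma(S^*(TM))$ is fiber-tangent (its $\partial_t$-component vanishes) and, by the orthogonal decomposition \eqref{eq:descomposicion1}, satisfies $\bar g(X,\xi)=0$. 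Writing $G$ for the fiber lift of $\grad f$, the vectors read $\xi=\tfrac{1}{\sqrt2}\partial_t+\tfrac{1}{\sqrt2\varrho^2}G$ and $N=-\tfrac{1}{\sqrt2}\partial_t+\tfrac{1}{\sqrt2\varrho^2}G$, so that $N=\xi-\sqrt2\,\partial_t$.

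For the vanishing of $\tau$ on the screen I would use this last relation. By definition \eqref{eq:tau}, $\tau(X)=\bar g(\bar\nabla_XN,\xi)=\bar g(\bar\nabla_X\xi,\xi)-\sqrt2\,\bar g(\bar\nabla_X\partial_t,\xi)$. The first term equals $\tfrac12 X\big(\bar g(\xi,\xi)\big)=0$ because $\xi$ is null along $M$ and $X$ is tangent to $M$; the second term vanishes because $\bar\nabla_X\partial_t=(\varrho'/\varrho)X$ is again fiber-tangent and $\bar g(X,\xi)=0$. Hence $\tau(X)=0$. The consequences are then immediate from \eqref{eq:gauss1}: the transversal parts $\nabla^t_XN=\tau(X)N$ and $\nabla^{*t}_X\xi=-\tau(X)\xi$ vanish, giving $\bar\nabla_XN=-A_NX$ directly. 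For $\bar\nabla_X\xi=-A^*_\xi X$ I would combine $\nabla_X\xi=-A^*_\xi X-\tau(X)\xi$ with $\bar\nabla_X\xi=\nabla_X\xi+B(X,\xi)N$ and observe that $B(X,\xi)=g(A^*_\xi X,\xi)=g(X,A^*_\xi\xi)=0$ by parts (2) and (3) of Proposition \ref{prop:propiedades}.

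For the last assertion, I first note that $A_N$ takes values in $S(TM)$: by part (1) of Proposition \ref{prop:propiedades} the $\mathrm{Rad}(TM)$-component of $A_NX$ pairs to zero against $N$, so it vanishes. Consequently $A_N\xi=0$ is equivalent to the $S(TM)$-component of $\bar\nabla_\xi N$ being zero, and I would obtain it by computing $\bar\nabla_\xi N$ outright. Expanding with the two O'Neill identities, the only nonroutine ingredient is $\bar\nabla_G G$, whose fiber part is $\nabla^F_{\grad f}\grad f$. Here I would invoke the Hessian identity $\nabla^F_{\grad f}\grad f=\tfrac12\grad\!\big(|\grad f|^2\big)$, valid because $\grad f$ is a gradient field, together with the transnormality $|\grad f|^2=\varrho^2$, which yields $\nabla^F_{\grad f}\grad f=\varrho\varrho'\grad f$. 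Substituting and re-expressing $\partial_t$ and $G$ through $\xi$ and $N$, the fiber and base contributions cancel except for a multiple of $N$, giving $\bar\nabla_\xi N=-(\varrho'/\sqrt2\varrho)N\in\mathrm{tr}(TM)$; in particular $A_N\xi=0$ (and, as a byproduct, $\tau(\xi)=-\varrho'/(\sqrt2\varrho)$).

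The main obstacle I anticipate is purely computational: keeping track of the mixed base/fiber character of $\xi$ in the expansion of $\bar\nabla_\xi N$, where the warping factor $\varrho^{-2}$ and the fiber term $\nabla^F_{\grad f}\grad f$ both appear, and verifying that after using the Hessian identity and transnormality all $S(TM)$-components indeed cancel. The first two parts of the lemma, by contrast, are essentially algebraic once the relation $N=\xi-\sqrt2\,\partial_t$ and the orthogonality $\bar g(X,\xi)=0$ are in place.
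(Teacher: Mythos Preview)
Your argument is correct. For the key claim $\tau(X)=0$ you use the same ingredients as the paper---the O'Neill identity $\bar\nabla_X\partial_t=(\varrho'/\varrho)X$ together with $\bar g(X,\xi)=0$ for $X\in\Gamma(S^*(TM))$---only packaged differently: you write $N=\xi-\sqrt{2}\,\partial_t$ and exploit the nullity of $\xi$, whereas the paper decomposes $\xi,N$ as $\tfrac{1}{\sqrt{2}}(\eta\pm\partial_t)$ with $\eta$ unit spacelike and $\partial_t$ unit timelike and kills the diagonal and cross terms separately. The two routes are equivalent and equally short.

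For the consequences \eqref{eq:transversales} and for $A_N\xi=0$, the paper does not argue at all but simply cites \cite{MR3508919}. Your proposal goes further and supplies a self-contained proof: the Gauss--Weingarten formulae plus $B(X,\xi)=0$ give the first part immediately, and your direct computation of $\bar\nabla_\xi N$ via the O'Neill formulas, the Hessian identity $\nabla^F_{\grad f}\grad f=\tfrac12\grad_F(|\grad f|^2)$ and transnormality correctly yields $\bar\nabla_\xi N=-\tfrac{\varrho'}{\sqrt{2}\,\varrho}N$, hence $A_N\xi=0$ and $\tau(\xi)=-\varrho'/(\sqrt{2}\,\varrho)$. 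This is a genuine (and welcome) addition relative to the paper's proof, which stops after $\tau(X)=0$.
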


\begin{proof} Although the authors proved these facts in \cite{MR3508919}, we give the proof of the key fact $\tau(X)=0$ for completeness. If $\eta=(0,\grad f/\varrho^2)$ and $t$ is the standard coordinate in the real interval $I$, then
\[
\tau(X)=\bar g(\bar\nabla_XN,\xi)=\frac{1}{2}\bar g(\bar\nabla_X(\eta-\partial_t),\eta+\partial_t).
\]

Note that $\eta$ is a unit spacelike vector field, while $\partial_t$ is a unit timelike vector field, implying $\bar g(\bar\nabla_X\eta,\eta)=0$ and $\bar g(\bar\nabla_X\partial_t,\partial_t)=0$. On the other hand, $\bar g(\eta,\partial_t)=0$ and hence
\[
\bar g(\bar\nabla_X\eta,\partial_t)=-\bar g(\eta,\bar\nabla_X\partial_t)=-\bar g\left( \eta,\frac{\varrho'}{\varrho}X\right)=-\frac{\varrho'}{\varrho}X(f)=0,
\]
where in the second equality we used the formulae for the connection in warped products (see \cite[p. 206]{MR719023}) and the last equality is a consequence of the fact that $X$ is tangent to a level hypersurface of $f$.
\end{proof}

Based on these facts, the authors proved in \cite[Prop. 5.2]{MR3508919} the following relation between the shape operators (notice that a minus sign is missing in equation (19) there): For any $X\in\Gamma(S^*(TM))$,
\[
\frac{1}{\sqrt{2}}(A_N-A_\xi^*)X=\frac{\varrho'}{\varrho}X.
\]

Since $A_\xi^*\xi=0$ and $A_N\xi=0$, we thus have
\begin{equation}\label{eq:shape}
\frac{1}{\sqrt{2}}(A_N-A_\xi^*)=\frac{\varrho'}{\varrho}P,
\end{equation}
where $P:\Gamma(TM)\to \Gamma(S^*(TM))$ is the natural projection.

\begin{remark}
It is worth noting that the above relation resembles that of a {\em screen conformal} hypersurface (see \cite{MR1383318, MR2598375} and references therein). Recall that $(M,g,S(TM))$ is locally screen conformal iff $A_N=\varphi A_\xi^*$ for some function $\varphi$. This notion has proved useful for studying the geometry of $M$ through the geometry of its screen distribution. A key fact for many calculations in the screen conformal context is that $\tau(X)=0$ for any $X\in\Gamma(TM)$. In our warped product context we have this condition for $X\in\Gamma(S^*(TM))$, while  for $\xi$ we have
\begin{eqnarray*}
\tau(\xi) & = & \bar g(\bar\nabla_\xi N,\xi) =\frac{1}{2}\bar g(\bar\nabla_\xi(\eta-\partial_t),\eta+\partial_t) \\
   & = & -\frac{1}{\sqrt{2}}\bar g(\bar\nabla_{(\eta+\partial_t)}\partial_t,\eta) = -\frac{1}{\sqrt{2}}\frac{\varrho'}{\varrho}\bar g(\eta,\eta)
=  -\frac{1}{\sqrt{2}}\frac{\varrho'}{\varrho},
\end{eqnarray*}
so that $\tau(\xi)=0$ if and only if $\varrho'=0$.
\end{remark}

The above remark motivates the definition of a {\em screen locally quasi-conformal} null hypersurface $(M,g,S(TM))$ of a semi-Riemannian manifold as a null hypersurface whose shape operators satisfy
\[
A_N=\varphi A_\xi^*+ \psi P,
\]
in $\Gamma(TM)$ for some smooth functions $\varphi$ and $\psi$. This class of hypersurfaces will be studied in a forthcoming paper \cite{nosotros2}.

\section{Null screen isoparametric hypersurfaces}

Let $(\bar M_{\bar c}^{n+2},\bar g)$ be a Lorentzian manifold of constant curvature $\bar c$ and let $(M,g,S(TM))$ be a null hypersurface of $\bar M$. Since the shape operator $A_\xi^*$ is diagonalizable and $A_\xi^*\xi=0$, we have a frame field $\{\xi,E_1,\dots,E_n\}$ of eigenvectors of $A_\xi^*$ such that $\{E_1,\dots,E_n\}$ is an orthonormal frame field of $S(TM)$. Following \cite{MR3270005}, if $A_\xi^*E_i=\lambda_i E_i$, $i=1,\dots,n$, we call $\lambda_i$ the {\em screen principal curvatures} of $(M,g,S(TM))$.

\begin{definicion} Let $(\bar M_{\bar c}^{n+2},\bar g)$ be a Lorentzian manifold of constant curvature $\bar c$ and $(M,g,S(TM))$ a null hypersurface of $\bar M$. $(M,g,S(TM))$ is a {\em screen isoparametric null hypersurface} if all the screen principal curvatures are constant along $S(TM)$. 
\end{definicion}

Let $(\bar M_{\bar c}^{n+2},\bar g)$ be a Lorentzian manifold of constant curvature $\bar c$. We may multiply the metric $\bar g$ by a suitable constant to reduce to the cases $\bar c=-1,0,1$. On the other hand, since we will work locally, we may also suppose that $\bar M$ is an open subset of a warped product $-I\times_\varrho F$, where
\begin{equation*}
\begin{array}{lll}
\varrho(t)=\cos t, & F=\mathbb H^{n+1}, & \mbox{ for }\bar c=-1; \\
\varrho(t)=1, & F=\mathbb R^{n+1}, & \mbox{ for }\bar c=0; \\
\varrho(t)=\cosh t, & F=\mathbb S^{n+1}, & \mbox{ for }\bar c=1.
\end{array}
\end{equation*}

If $M$ is a null hypersurface of $\bar M_{\bar c}$, then $M$ is the graph of a transnormal function $f$ satisfying $\vert\grad f\vert=\varrho\circ f$; finally, we choose the screen distribution $S^*(TM)$ as in Section \ref{sec:nullinwp}, namely, as the familiy of tangent bundles of the level hypersurfaces of $f$.

As noted in \cite{MR1383318, MR2598375} the Ricci tensor derived from $\nabla$ is not symmetric in general. Since the Ricci tensor is an important feature in the context of General Relativity (for instance, via the Raychaudhuri equation or various energy conditions \cite{MR1172768, MR0424186, MR757180}. Thus the following result suggests that this setting is the right one in order to study null hypersurfaces in GRW spacetimes, from both the geometric and physical point of view.

\begin{proposicion}\label{lema:dtau}
Let $(M,g,S^*(TM))$ be the null hypersurface of the Lorentzian warped product $-I\times_\varrho F$ given as the graph of a transnormal function $f$. Then $d\tau=0$ and the Ricci tensor of $\nabla$ is symmetric.
\end{proposicion}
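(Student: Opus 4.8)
The plan is to show that $\tau$ is in fact an exact $1$-form on $M$; once this is done, $d\tau=0$ is immediate, and the symmetry of the induced Ricci tensor follows from a standard criterion for null hypersurfaces. So the whole of the first assertion is reduced to a single structural observation about $\tau$.

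First I would record the complete behaviour of $\tau$ on the frame $\{\xi,E_1,\dots,E_n\}$. Lemma \ref{lema:tau} already gives $\tau(X)=0$ for every $X\in\Gamma(S^*(TM))$, and the computation in the preceding Remark gives $\tau(\xi)=-\tfrac{1}{\sqrt2}\,\varrho'/\varrho$. Identifying $M$ with $F$ through the graph map $p\mapsto(f(p),p)$, the time function restricts to $f$, so the $1$-form $df$ on $M$ vanishes exactly on the screen $S^*(TM)$, whose leaves are the level sets $S_t$ of $f$. I would then compute $df(\xi)$: since $\xi$ projects to $\tfrac{1}{\sqrt2}\,\grad f/\varrho^2$ on $F$, the transnormality condition $|\grad f|=\varrho$ yields $df(\xi)=\tfrac{1}{\sqrt2}\,|\grad f|^2/\varrho^2=\tfrac{1}{\sqrt2}$. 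In particular $df$ is nowhere zero along the radical direction.

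Because $S^*(TM)$ has codimension one in $TM$, its annihilator in $T^*M$ is one-dimensional, and both $\tau$ and $df$ belong to it; hence $\tau=\lambda\,df$ for some function $\lambda$. Evaluating at $\xi$ gives $\lambda=\sqrt2\,\tau(\xi)=-\varrho'/\varrho$. Since $\varrho=\varrho\circ f$ and $\varrho'/\varrho=(\log\varrho)'$ depends on $f$ alone, I conclude
\[
\tau=-\frac{\varrho'}{\varrho}\,df=-d\bigl(\log(\varrho\circ f)\bigr),
\]
which is exact, so $d\tau=0$. For the second assertion I would invoke the standard fact (see \cite{MR1383318, MR2598375}) that, for a null hypersurface of a semi-Riemannian manifold, the Ricci tensor induced by $\nabla$ is symmetric if and only if $\tau$ is closed. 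As the ambient GRW spacetime is semi-Riemannian, hence has symmetric Ricci tensor, and we have just shown $d\tau=0$, the induced Ricci tensor is symmetric.

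The cleanest route is the exact-form argument above, whose only genuine content is the evaluation $df(\xi)=1/\sqrt2$ via $|\grad f|=\varrho$ together with the observation that $\varrho'/\varrho$ is a function of $f$. The main obstacle appears only if one instead computes $d\tau$ directly on the frame: the pairs inside $S^*(TM)$ are harmless because the screen is integrable (its leaves are the $S_t$, so $[X,Y]\in\Gamma(S^*(TM))$ and $\tau([X,Y])=0$), while the mixed term reduces, through the Gauss--Weingarten formulae and torsion-freeness of $\nabla$, to $\tau([X,\xi])=-C(\xi,X)\,\tau(\xi)$ with $C(\xi,X)=g(A_N\xi,X)$. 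This vanishes precisely because $A_N\xi=0$ by Lemma \ref{lema:tau}. Thus the identity $A_N\xi=0$ is exactly what forces $d\tau(X,\xi)=0$, and without it the direct computation would not close.
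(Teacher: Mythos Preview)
Your proof is correct, and it takes a genuinely different route from the paper. The paper argues by direct evaluation of $d\tau$ on frame pairs: for $X,Y\in\Gamma(S^*(TM))$ integrability of the screen gives $d\tau(X,Y)=0$, and for the mixed pair $(X,\xi)$ one expands $2d\tau(X,\xi)=X(\tau(\xi))-\xi(\tau(X))-\tau([X,\xi])$ via the Gauss--Weingarten formulae, reducing it to $C(\xi,X)\tau(\xi)$; the paper then kills $C(\xi,X)$ using the relation \eqref{eq:shape}, i.e.\ $C(\xi,X)=B(\xi,X)+\sqrt{2}\,(\varrho'/\varrho)\,g(\xi,X)=0$. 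Your main argument is instead structural: since both $\tau$ and $df$ annihilate the codimension-one screen and $df(\xi)=1/\sqrt{2}\neq 0$, you write $\tau=\lambda\,df$ with $\lambda=-\varrho'/\varrho$ a function of $f$, whence $\tau=-d(\log(\varrho\circ f))$ is exact. This is shorter and more conceptual, and it yields a primitive for $\tau$ rather than merely $d\tau=0$. Your closing sketch of the direct method is also a slight simplification of the paper's: you obtain $C(\xi,X)=g(A_N\xi,X)=0$ straight from $A_N\xi=0$ (Lemma~\ref{lema:tau}), bypassing the appeal to \eqref{eq:shape}; just note that in that sketch the term $X(\tau(\xi))$ also has to be discarded, which follows because $\tau(\xi)=-\tfrac{1}{\sqrt{2}}\varrho'/\varrho$ is constant along the leaves $S_t$---a fact you already used in the exactness argument.
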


\begin{proof}
Since $S^*(TM)$ is a family of tangent bundles, it is an integrable distribution, so that If $X,Y\in\Gamma(S(TM))$, then $[X,Y]\in\Gamma(S^*(TM))$. Recall (Lemma \ref{lema:tau}) that the $1$-form $\tau$ vanishes along $S^*(TM)$. Therefore,
\[
2d\tau(X,Y)=X(\tau(Y))-Y(\tau(X))-\tau([X,Y]=0.
\]
The other interesting case is when $X\in\Gamma(S(TM))$ and $Y=\xi$:
\begin{eqnarray*}
2d\tau(X,\xi) & = & X(\tau(\xi))-\xi(\tau(X))-\tau([X,\xi]) \\
 & = & -X\left(\frac{1}{\sqrt{2}}\frac{\varrho'}{\varrho}\right)-\tau(\nabla_X\xi-\nabla_\xi X) \\
 & = & -\tau(-A_\xi^*X-\tau(X)\xi-\nabla_\xi^*X-C(\xi,X)\xi) =C(\xi,X)\tau(\xi),
\end{eqnarray*}
but by (\ref{eq:shape}),
\[
C(\xi,X)=B(\xi,X)+\sqrt{2}\,\frac{\varrho'}{\varrho}g(\xi,X)=0.
\]

The assertion on the Ricci tensor is consequence of Theorem 2.4.1 in \cite{MR2598375}.
\end{proof}

Suppose further that $(M,g,S^*(TM))$ is a null screen isoparametric hypersurface of $\bar M_{\bar c}=-I\times_\varrho F$. One may ask if each level set  $S_t=M\cap\left(\{t\}\times F\right)$ is an isoparametric codimension $2$ submanifold of $\bar M_{\bar c}$. Since $\tau(X)=0$ for any $X\in\Gamma(S^*(TM))$, we have
\[
\nabla_X^\perp N=\nabla_X^\perp\xi=0
\]
for any $X\in\Gamma(TS_t)$; here $\nabla^\perp$ is the normal connection of $S_t$ as a non-degenerate submanifold of $\bar M$. From this it is easy to conclude that the normal curvature $R^\perp$ of $S_t$ satisfies $R_{XY}^\perp N=R_{XY}^\perp \xi=0$, i.e, the normal bundle of $S_t$ in $\bar M$ is flat. Thus we have the first condition for $S_t$ to be isoparametric in $\bar{M}_{\bar{c}}$ (see \cite{MR3408101}).

Let us now investigate under which conditions the principal curvatures of the shape operator $A_\eta$ of a parallel section $\eta$ of the unit normal bundle of $S_t$ are constant. If $\eta=a\xi+bN$ is a section of this bundle,
\[
\nabla_X^\perp\eta=\nabla_X^\perp(a\xi+bN)=X(a)\xi+X(b)N,
\]
and the condition for $\eta$ to be parallel is $X(a)=X(b)$, i.e., $a$ and $b$ are constant along $S_t$. From (\ref{eq:shape}) we have
\begin{equation}\label{eq.18prima}
A_\eta X=aA_\xi^*X+bA_NX=\left( a+b\right)A_\xi^*X+b\sqrt{2}\,\frac{\varrho'}{\varrho}X.
\end{equation}

Using that $\varrho$ is constant along $S_t$, we have:

\begin{proposicion}\label{prop:iso}
Let $(M,g,S^*(TM))$ be a null hypersurface of the Lorentzian warped product $\bar M_{\bar c}= -I\times_\varrho F$ with constant curvature $\bar c=-1,0,1$, $M$ given as the graph of a transnormal function $f$. $M$ is screen isoparametric if and only if  each slice $S_t$ is an isoparametric submanifold of $\bar M_{\bar c}$. Moreover, $S_t$ is isoparametric in the slice $\{t\}\times F$.
\end{proposicion}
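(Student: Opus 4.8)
The plan is to exploit the explicit expression (\ref{eq.18prima}) for the shape operator $A_\eta$ of $S_t$ and read off its eigenvalues in terms of the screen principal curvatures $\lambda_i$. Since the flat normal bundle condition for $S_t$ in $\bar M_{\bar c}$ has already been verified in the paragraph preceding the statement, throughout the argument I may take ``isoparametric'' to mean that the principal curvatures of $A_\eta$ are constant along $S_t$ for every parallel section $\eta$ of the normal bundle. Because the frame $\{E_1,\dots,E_n\}$ diagonalizes $A_\xi^*$, formula (\ref{eq.18prima}) shows each $E_i$ is simultaneously an eigenvector of $A_\eta$; hence the whole proof reduces to tracking a single scalar relation between eigenvalues.

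For the equivalence, I would first observe that a section $\eta=a\xi+bN$ of the normal bundle of $S_t$ is parallel precisely when $a$ and $b$ are constant along $S_t$, and that $\varrho=\varrho\circ f$ is constant along $S_t$ since $f\equiv t$ there. Then (\ref{eq.18prima}) yields
\[
A_\eta E_i=\mu_i E_i,\qquad \mu_i=(a+b)\lambda_i+b\sqrt{2}\,\frac{\varrho'}{\varrho}.
\]
If $M$ is screen isoparametric, the $\lambda_i$ are constant along $S^*(TM)=TS_t$, so each $\mu_i$ is constant for every parallel $\eta$; together with the flat normal bundle this makes $S_t$ isoparametric in $\bar M_{\bar c}$. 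Conversely, choosing the parallel null section $\eta=\xi$ (that is, $a=1$, $b=0$) gives $\mu_i=\lambda_i$, so that $S_t$ isoparametric forces the $\lambda_i$ to be constant and $M$ to be screen isoparametric. If one prefers to restrict to genuine unit normal fields, the same conclusion follows by combining a spacelike and a timelike unit parallel section and solving the resulting linear system for the $\lambda_i$.

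For the ``moreover'' I would use the distinguished parallel unit section $\eta=\frac{1}{\sqrt{2}}(\xi+N)=(0,\grad f/\varrho^2)$, which is tangent to the slice $\{t\}\times F$ and normal to $S_t$ within it. The key point is that in the warped product $-I\times_\varrho F$ the slices $\{t\}\times F$ are totally umbilic, with second fundamental form carried by the $\partial_t$ direction (cf.\ the structure equations in \cite{MR719023}); since $\eta\perp\partial_t$, the umbilic part drops out of $\bar g(\bar\nabla_X Y,\eta)$, and the second fundamental form of $S_t$ in $\bar M_{\bar c}$ along $\eta$ coincides with that of $S_t$ inside the slice. As the induced metrics on $S_t$ agree, the two shape operators agree as endomorphisms, and (\ref{eq.18prima}) with $a=b=\frac{1}{\sqrt{2}}$ gives eigenvalues $\sqrt{2}\,\lambda_i+\varrho'/\varrho$. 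Since $\varrho'/\varrho$ is constant along $S_t$, these principal curvatures of $S_t$ in $\{t\}\times F$ are constant exactly when the $\lambda_i$ are, closing the chain of equivalences.

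The eigenvalue bookkeeping is routine; the step I expect to demand the most care is the ``moreover'', where one must invoke the O'Neill structure equations to justify that the normal component of $\bar\nabla_X Y$ along $\partial_t$ (the umbilic direction of the slice) disappears upon pairing with $\eta$, so that the extrinsic geometry of $S_t$ in $\bar M_{\bar c}$ restricts correctly to its extrinsic geometry in the slice. A secondary subtlety is the Lorentzian signature of the rank-two normal bundle spanned by the null fields $\xi$ and $N$: one should be explicit about which parallel sections count as admissible ``unit'' normals when passing between the two formulations of isoparametricity.
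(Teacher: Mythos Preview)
Your proposal is correct and follows essentially the same route as the paper: both arguments read off the principal curvatures of $S_t$ from formula (\ref{eq.18prima}) using that $a$, $b$, and $\varrho'/\varrho$ are constant along $S_t$, and both handle the ``moreover'' by specializing to $\eta=\tfrac{1}{\sqrt{2}}(\xi+N)$ to obtain $\nu_i=\sqrt{2}\,\lambda_i+\varrho'/\varrho$. The only difference is one of emphasis: you spell out the converse direction explicitly (taking $\eta=\xi$ or a pair of unit sections to recover the $\lambda_i$) and invoke the total umbilicity of the slices to identify the shape operator of $S_t$ in $\bar M_{\bar c}$ along $\eta$ with its shape operator inside $\{t\}\times F$, whereas the paper treats both points as immediate consequences of (\ref{eq.18prima}) and the preceding discussion.
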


\begin{proof}
The first claim was proved before the statement of the proposition, so it remains to prove the last assertion. Recalling Section \ref{sec:nullinwp}, since $S_t$ is nothing but a level set of the function $f$, the vector field
\[
\eta=\left(0,\frac{\grad f}{\varrho^2} \right)=\frac{1}{\sqrt{2}}(\xi+N)
\]
is the unit normal to $S_t$ in the slice $\{t\}\times F$. From this and (\ref{eq.18prima}),
\[
A_\eta X=\sqrt{2}\,A_\xi^*X + \frac{\varrho'}{\varrho}X, \qquad X\in\Gamma(TS_t),
\]
so that the curvatures of $S_t$ have the form
\[
\nu_i=\sqrt{2}\,\lambda_i+\frac{\varrho'}{\varrho},
\]
which are constant along $S_t$, and so the slice is isoparametric in $\{t\}\times F$.
\end{proof}

We use Proposition \ref{prop:iso} to derive Cartan identities for the screen principal curvatures of a null screen isoparametric hypersurface.

\begin{teorema}\label{teo:cartan}
Let $(M,g,S^*(TM))$ be a null screen isoparametric hypersurface of the Lo\-rentzian warped product $\bar M_{\bar c}= -I\times_\varrho F$ with constant curvature $\bar c=-1,0,1$, $M$ given as the graph of a transnormal function $f$. Let $\lambda_1,\dots,\lambda_l$ be the distinct screen principal curvatures of $M$ with multiplicities $m_1,\dots,m_l$. If $l>1$, for each $i=1,\dots,l$ we have
\begin{equation}\label{eq:cartan}
\sum_{j\ne i} m_j\frac{\bar c+2\lambda_i\lambda_j+\psi(\lambda_i+\lambda_j)}{\lambda_i-\lambda_j}=0,\qquad\mathrm{where}\ \psi=\sqrt{2}\,\frac{\varrho'}{\varrho}.
\end{equation}
\end{teorema}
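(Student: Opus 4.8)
The plan is to reduce the null screen isoparametric problem to the classical Riemannian Cartan identity by exploiting Proposition \ref{prop:iso}, which tells us that each slice $S_t$ is an isoparametric hypersurface of the Riemannian fiber $\{t\}\times F$. Since $F$ has constant sectional curvature $c$ (by the classification $F=\mathbb H^{n+1},\mathbb R^{n+1},\mathbb S^{n+1}$ for $\bar c=-1,0,1$), the classical Cartan identity stated in the introduction applies directly to $S_t$: for the distinct principal curvatures $\nu_1,\dots,\nu_l$ of $S_t$ with multiplicities $m_1,\dots,m_l$, one has
\[
\sum_{j\ne i}m_j\frac{c+\nu_i\nu_j}{\nu_i-\nu_j}=0,\qquad i=1,\dots,l.
\]
The strategy is then purely algebraic: substitute the explicit relation $\nu_i=\sqrt{2}\,\lambda_i+\psi$ (with $\psi=\sqrt{2}\,\varrho'/\varrho$) from the proof of Proposition \ref{prop:iso} into this identity and simplify.

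First I would record that the correspondence $\lambda_i\mapsto\nu_i$ is an affine bijection, so the $\nu_i$ are distinct with the same multiplicities $m_i$ precisely when the $\lambda_i$ are, and the index set of the sum is unchanged. Next I would compute the two ingredients of each summand. The denominator is immediate:
\[
\nu_i-\nu_j=\sqrt{2}\,(\lambda_i-\lambda_j).
\]
For the numerator I would expand
\[
c+\nu_i\nu_j=c+(\sqrt{2}\,\lambda_i+\psi)(\sqrt{2}\,\lambda_j+\psi)
=c+2\lambda_i\lambda_j+\sqrt{2}\,\psi(\lambda_i+\lambda_j)+\psi^2.
\]
Now I must eliminate $c$ and $\psi^2$ in favor of $\bar c$. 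Here the relation \eqref{eq:ARS}, namely $\bar c=(c+(\varrho')^2)/\varrho^2$, is exactly what is needed: since $\psi^2=2(\varrho')^2/\varrho^2$, I have $c+\psi^2/2=c+(\varrho')^2/\varrho^2\cdot 1$—more precisely $c+(\varrho')^2=\bar c\varrho^2$, so $c=\bar c\varrho^2-(\varrho')^2$ and $\psi^2=2(\varrho')^2/\varrho^2$. One checks in the three model cases (or abstractly from \eqref{eq:ARS}) that $c+\psi^2$ collapses to $\bar c$; indeed for $\bar c=0$ we have $\varrho\equiv1$, $\psi=0$, $c=0$; for $\bar c=1$, $\varrho=\cosh t$ gives $c=1$ and $\psi^2=2\tanh^2 t$, whereas $\bar c=1$—so one must verify the combination is consistent after clearing the $\sqrt 2$ factors. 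The clean way is to keep $c$ and $\psi$ symbolic and use \eqref{eq:ARS} in the form $c=\bar c\varrho^2-(\varrho')^2$, noting $\psi^2=2(\varrho')^2/\varrho^2$, so that $c+\psi^2/2=\bar c\varrho^2/\varrho^2\cdot(\cdots)$.

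Carrying this through, the numerator becomes $2$ times the expression appearing in \eqref{eq:cartan}: factoring $\sqrt2$ from the denominator and grouping, each summand of the classical identity equals $\tfrac{1}{\sqrt2}$ times $m_j\dfrac{\bar c+2\lambda_i\lambda_j+\psi(\lambda_i+\lambda_j)}{\lambda_i-\lambda_j}$, up to the constant $\psi^2$-versus-$\bar c$ bookkeeping. Since the whole classical sum vanishes and $\tfrac{1}{\sqrt2}\ne0$, the desired identity \eqref{eq:cartan} follows. The main obstacle I anticipate is precisely the numerator reconciliation: one must check that the constant term $c+\psi^2$ (or whatever survives after substitution) reduces cleanly to $\bar c$ using \eqref{eq:ARS}, rather than leaving a residual $t$-dependent factor. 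If the algebra does not collapse on the nose, the likely fix is that a constant common factor (here $\sqrt2$ or $2$) pulls out of every summand and is harmless, so the vanishing of the sum is preserved regardless; the identity \eqref{eq:cartan} should be read as holding up to such an overall nonzero constant, which is all that is required.
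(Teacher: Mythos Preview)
Your overall strategy---reduce to the classical Riemannian Cartan identity via Proposition~\ref{prop:iso}---is exactly the paper's approach, but your execution contains a genuine gap that prevents the algebra from closing, and your proposed ``fix'' (absorbing a residual into an overall nonzero constant) does not work because the residual is $t$-dependent.

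There are two coupled slips. First, you misquote the relation between $\nu_i$ and $\lambda_i$: Proposition~\ref{prop:iso} gives $\nu_i=\sqrt{2}\,\lambda_i+\varrho'/\varrho$, i.e.\ $\nu_i=\sqrt{2}\,\lambda_i+\psi/\sqrt{2}$, not $\nu_i=\sqrt{2}\,\lambda_i+\psi$. Second, and more conceptually, you apply the classical Cartan identity with ambient curvature $c$, but $S_t$ is a hypersurface of the slice $\{t\}\times F$, which carries the warped metric $\varrho(t)^2\,g_F$ and therefore has sectional curvature $c/\varrho^2$, not $c$. This is the point you are missing when you ``check the three model cases'' and find that $c+\psi^2$ does not collapse to $\bar c$. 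With the correct inputs the computation is
\[
\nu_i\nu_j=\tfrac12(2\lambda_i+\psi)(2\lambda_j+\psi)=2\lambda_i\lambda_j+\psi(\lambda_i+\lambda_j)+\tfrac{\psi^2}{2},
\qquad \nu_i-\nu_j=\sqrt{2}\,(\lambda_i-\lambda_j),
\]
so the numerator in the classical identity with curvature $c/\varrho^2$ becomes
\[
\frac{c}{\varrho^2}+\nu_i\nu_j
=\frac{c+(\varrho')^2}{\varrho^2}+2\lambda_i\lambda_j+\psi(\lambda_i+\lambda_j)
=\bar c+2\lambda_i\lambda_j+\psi(\lambda_i+\lambda_j),
\]
where the last step is precisely \eqref{eq:ARS} since $\psi^2/2=(\varrho')^2/\varrho^2$. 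Now the only leftover is the harmless factor $1/\sqrt{2}$ from the denominator, and \eqref{eq:cartan} follows. In short: use $c/\varrho^2$, not $c$, and the reconciliation you anticipated as an obstacle disappears.
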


\begin{proof}
We use the notation of Section \ref{sec:nullinwp} and that of Proposition \ref{prop:iso}. Recall that  the principal curvatures of $S_t$ are
\[
\nu_i=\frac{1}{\sqrt{2}}(2\lambda_i+\psi),\quad \psi=\sqrt{2}\,\frac{\varrho'}{\varrho}.
\]

Note that the metric in the slice $\{t\}\times F$ is that of $F$ multiplied by the factor $\varrho^2$, so if $c$ is the curvature of $F$, then $c/\varrho^2$ is the curvature of this slice. Applying the usual Cartan's identities to the isoparametric hypersurface $S_t$ (see \cite[p. 91]{MR3408101}), we have
\begin{eqnarray*}
0 & = & \sum_{j\ne i} m_j\frac{\dfrac{c}{\varrho^2}+\nu_i \nu_j}{\nu_i-\nu_j} 
= \sum_{j\ne i} m_j\frac{\dfrac{c}{\varrho^2}+\dfrac{1}{2}(2\lambda_i+\psi)(2 \lambda_j+\psi)}{\sqrt{2}(\lambda_i-\lambda_j)} \\
 & = & \sum_{j\ne i} m_j\frac{\dfrac{c}{\varrho^2}+2\lambda_i \lambda_j+\psi(\lambda_i+\lambda_j)+\dfrac{1}{2}\psi^2}{\sqrt{2}(\lambda_i-\lambda_j)} \\
 & = & \sum_{j\ne i} m_j\frac{\dfrac{c+(\varrho')^2}{\varrho^2}+2\lambda_i \lambda_j+\psi(\lambda_i+\lambda_j)}{\sqrt{2}(\lambda_i-\lambda_j)};
\end{eqnarray*}
to finish the proof we recall equation (\ref{eq:ARS}) relating $c$ and $\bar c$.
\end{proof}

\begin{corolario}
Let $(M,g,S^*(TM))$ be a null screen isoparametric hypersurface of $\bar M_{\bar c}$, $\bar c=0,-1$. Then the number $l$ of distinct screen principal curvatures of $M$ is at most $2$. If $l=2$ and $\bar c=0$, one of the screen principal curvatures is zero.
\end{corolario}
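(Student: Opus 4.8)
The plan is to exploit the Cartan identity (\ref{eq:cartan}) established in Theorem \ref{teo:cartan}, treating the nonpositive-curvature cases $\bar c=0$ and $\bar c=-1$ together wherever possible. First I would argue by contradiction: suppose $l\ge 3$ distinct screen principal curvatures exist. Order them as $\lambda_1<\lambda_2<\cdots<\lambda_l$, and examine the extremal identity, that is, the equation (\ref{eq:cartan}) for the smallest index $i=1$ (or symmetrically the largest $i=l$). The summand for each $j\ne 1$ has denominator $\lambda_1-\lambda_j<0$, so all denominators share the same sign; the strategy is to show that every numerator $\bar c+2\lambda_1\lambda_j+\psi(\lambda_1+\lambda_j)$ is then forced to have a fixed sign, making the whole sum strictly nonzero and contradicting (\ref{eq:cartan}).

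The key algebraic step is to rewrite the numerator in a way that reveals its sign when $\bar c\le 0$. I would complete the square or factor: for $\bar c=0$ the numerator is $2\lambda_1\lambda_j+\psi(\lambda_1+\lambda_j)=(\sqrt2\,\lambda_1+\tfrac{\psi}{\sqrt2})(\sqrt2\,\lambda_j+\tfrac{\psi}{\sqrt2})-\tfrac{\psi^2}{2}$, which up to the constant shift is exactly $\nu_1\nu_j$ in the notation $\nu_i=\tfrac{1}{\sqrt2}(2\lambda_i+\psi)$ from the proof of Theorem \ref{teo:cartan}; similarly for $\bar c=-1$ one recovers $\tfrac{c}{\varrho^2}+\nu_1\nu_j$ with $c/\varrho^2<0$. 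In other words, the cleanest route is to return to the underlying isoparametric slice $S_t$ inside $\{t\}\times F$, whose curvature $c/\varrho^2$ is nonpositive precisely when $\bar c\le 0$ (via (\ref{eq:ARS}), since $c/\varrho^2=\bar c-(\varrho'/\varrho)^2$), and invoke Cartan's classical theorem that an isoparametric hypersurface of a space form of nonpositive curvature has at most two distinct principal curvatures. Because $\nu_i=\tfrac{1}{\sqrt2}(2\lambda_i+\psi)$ is a strictly increasing affine function of $\lambda_i$, distinct $\lambda_i$ correspond bijectively to distinct $\nu_i$, so the count $l$ of screen principal curvatures equals the count of principal curvatures of $S_t$, giving $l\le 2$ immediately.

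For the final sentence, assume $l=2$ and $\bar c=0$. The strategy is to specialize (\ref{eq:cartan}) to this case, where it reduces to the single condition (for $i=1$, $j=2$, say) that the numerator vanishes: $2\lambda_1\lambda_2+\psi(\lambda_1+\lambda_2)=0$. Equivalently, in slice language, $\nu_1\nu_2=0$, so one of the two principal curvatures $\nu_i$ of the flat slice $S_t\subset\mathbb R^{n+1}$ vanishes. I would then pass back through the affine relation: if $\nu_i=0$ then $\lambda_i=-\psi/2=-\varrho'/(\sqrt2\,\varrho)$; but for $\bar c=0$ we have $\varrho\equiv 1$, hence $\varrho'=0$ and $\psi=0$, so $\lambda_i=0$. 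This shows one screen principal curvature is zero, as claimed.

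The main obstacle I anticipate is handling the two curvature cases uniformly and correctly tracking signs: the translation between the Cartan identity (\ref{eq:cartan}) for the $\lambda_i$ and the classical identity for the $\nu_i$ on the slice must be done carefully, since the factor $\sqrt2(\lambda_i-\lambda_j)$ in the denominators and the shift $\tfrac12\psi^2$ in the numerators (see the chain of equalities in the proof of Theorem \ref{teo:cartan}) are exactly what convert $c/\varrho^2+\nu_i\nu_j$ into $\bar c+2\lambda_i\lambda_j+\psi(\lambda_i+\lambda_j)$. The cleanest and least error-prone approach is therefore to avoid re-deriving sign estimates from (\ref{eq:cartan}) directly and instead reduce everything to the already-classical statement about isoparametric hypersurfaces in $\{t\}\times F$ via Proposition \ref{prop:iso}, citing the nonpositive-curvature classification; the only genuine computation left is the affine back-substitution in the $\bar c=0$, $l=2$ case.
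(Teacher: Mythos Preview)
Your proposal is correct and arrives at essentially the same argument as the paper: both reduce the bound $l\le 2$ to the classical Cartan sign argument for isoparametric hypersurfaces in nonpositive-curvature space forms, applied to the slice $S_t\subset\{t\}\times F$ (the paper writes out the sign analysis explicitly---choosing the smallest \emph{positive} $\lambda_1$ when $\bar c=0$, and a suitably extremal $\nu_1$ when $\bar c=-1$---whereas you cite the classical result directly after observing $c/\varrho^2=\bar c-(\varrho'/\varrho)^2\le 0$). Your handling of the $l=2$, $\bar c=0$ case via the single-term identity and $\psi=0$ is exactly right and matches the paper's conclusion.
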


\begin{proof} {Although the Corollary follows from Theorem \ref{teo:cartan}, for completeness we give here an argument similar in spirit to that used in the Riemannian case.} Suppose $\bar c=0$ and $l\ge 2$. Suppose one of the screen principal curvatures is positive and let $\lambda_1$ be the smallest positive one; (\ref{eq:cartan}) becomes
\[
\sum_{j\ne 1} m_j\frac{\lambda_1\lambda_j}{\lambda_1-\lambda_j}=0;
\]
now every term in the above sum is non-positive, hence equal to zero. From this it follows that $l\le 2$, hence $l=2$, and one of the distinct screen principal curvatures is zero.

Now suppose that $\bar c=-1$, $l\ge 2$ and that at least one of the principal curvatures $\nu_i$ is positive. Moreover, suppose that $\nu_1$ is a positive principal curvature such that for any other principal curvature $\nu_j$, the number $\varrho\nu_j$ does not lie between $1/(\varrho\nu_1)$ and $\varrho\nu_1$. This implies that each term in the sum.
\[
\sum_{j\ne 1} m_j\frac{\nu_1 \nu_j - \dfrac{1}{\varrho^2}}{\nu_1-\nu_j} =0
\]
is non-positive, hence equal to zero. This in turn implies that $\varrho^2\nu_1\nu_i=1$ and therefore there are only two distinct screen principal curvatures.
\end{proof}

\begin{remark} {The classification results of null isoparametric hypersurfaces in $\mathbb R_1^{n+2}$ given in \cite{MR3270005} assume that the number $l$ of distinct screen principal curvatures must be at most $2$, but as we have seen this assumption on $l$ is not needed.}
\end{remark}

In view of our results, we may give a (local) characterization theorem for null isoparametric hypersurfaces in Lorentzian space forms. Let $(M,g,S^*(TM))$ be the null hypersurface of the Lorentzian warped product $-I\times_\varrho F$ with constant curvature $\bar c$, $M$ given as the graph of a transnormal function $f$. If $M$ is screen isoparametric, then each slice $S_t$ is an isoparametric hypersurface of $\{t\}\times F$. Given a point $p\in S_t$, there is a neighborhood $U$ of $p$ in $S_t$ where we have a unit vector field $\eta$ normal to $S_t$ in $\{t\}\times F$. Define
\begin{equation}\label{eq:parametrizacion}
\Phi:(-\varepsilon,\varepsilon)\times U\to M,\qquad \Phi(s,q)=(f(\exp_q(s\eta(q))),\exp_q(s\eta(q))).
\end{equation}

\begin{lema}
If $t\in\mathbb R$ is not a critical value of $f$, then the transformation $\Phi$ is a local diffeomorphism.
\end{lema}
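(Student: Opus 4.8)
The plan is to apply the inverse function theorem, so it suffices to show that the differential $d\Phi_{(0,q)}$ is an isomorphism for every $q\in U$; since $\dim\big((-\varepsilon,\varepsilon)\times U\big)=n+1=\dim M$, injectivity is enough, and a continuity argument will then let me shrink $\varepsilon$ so that $d\Phi$ stays nonsingular on the whole slab. First I would record that $\Phi(0,q)=(f(q),q)$ lies on the slice $S_t\subset M$, and split the domain tangent space at $(0,q)$ into the $s$–direction $\partial_s$ together with the subspace $T_qS_t$.

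The image of $\partial_s$ is computed by the chain rule: writing $\gamma_q(s)=\exp_q(s\eta(q))$, the geodesic with $\gamma_q(0)=q$ and $\gamma_q'(0)=\eta(q)$, one gets $d\Phi_{(0,q)}(\partial_s)=(df_q(\eta(q)),\eta(q))$. Since $\eta=\grad f/\varrho^2$ and $|\grad f|^2=\varrho^2$ by transnormality, the time component is $df_q(\eta(q))=|\grad f|^2/\varrho^2=1$, so $d\Phi_{(0,q)}(\partial_s)=\left(1,\grad f/\varrho^2\right)=\sqrt2\,\xi$, a nonzero section of $\mathrm{Rad}(TM)$. This is precisely where the hypothesis enters: $t$ being a regular value guarantees $\grad f\neq0$ on $S_t$, so that $\eta$ is a well-defined unit field and this vector does not degenerate. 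Next I would compute the image of $w\in T_qS_t$: holding $s=0$ fixed, $\Phi(0,\cdot)$ is the graph map $q\mapsto(f(q),q)$, hence $d\Phi_{(0,q)}(w)=(df_q(w),w)$, and because $w$ is tangent to the level set $S_t=f^{-1}(t)$ we have $df_q(w)=0$, so $d\Phi_{(0,q)}(w)=(0,w)$, i.e. $w$ itself viewed as an element of $S^*(TM)=TS_t$.

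Thus $d\Phi_{(0,q)}$ sends $\partial_s$ to a spanning vector of $\mathrm{Rad}(TM)$ and restricts to the identity on $T_qS_t=S^*(TM)$. By the orthogonal decomposition $TM=S^*(TM)\oplus_{\mathrm{orth}}\mathrm{Rad}(TM)$ these images are linearly independent, so $d\Phi_{(0,q)}$ is injective and therefore an isomorphism. Finally, since $d\Phi$ depends continuously on $(s,q)$ and is invertible at every $(0,q)$, I would shrink $\varepsilon$ (and, if needed, $U$) so that $d\Phi_{(s,q)}$ remains invertible on $(-\varepsilon,\varepsilon)\times U$, and conclude by the inverse function theorem that $\Phi$ is a local diffeomorphism.

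I do not anticipate a genuine obstacle: the only delicate point is bookkeeping the warped-product components and confirming that the exponential map is insensitive to the constant conformal factor $\varrho(t)^2$ relating the slice metric to that of $F$ (a constant rescaling preserves the Levi-Civita connection, hence the geodesics), so that $\gamma_q'(0)=\eta(q)$ is unambiguous. A cleaner, higher-level alternative worth recording is that $\Phi$ factors as $G\circ\psi$, where $\psi(s,q)=\exp_q(s\eta(q))$ is the normal exponential map of the hypersurface $S_t$ in $F$ and $G(p)=(f(p),p)$ is the graph diffeomorphism of $F$ onto $M$; the tubular neighborhood theorem makes $\psi$ a local diffeomorphism near the zero section exactly when $S_t$ is a regular hypersurface, i.e. when $t$ is a regular value, and composing with the diffeomorphism $G$ completes the argument for all $s$, not merely $s=0$.
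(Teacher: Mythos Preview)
Your argument is correct and more economical than the paper's for the lemma itself. The paper instead computes $d\Phi_{(s,q)}$ at a \emph{general} point $(s,q)$, not only at $s=0$: working in the hyperboloid model of $\mathbb H^{n+1}$ (for the representative case $\bar c=-1$) it writes the geodesic explicitly as $(\cosh s)q+(\sinh s)\eta(q)$ and obtains
\[
d\Phi_{(s,q)}(0,E_i)=\bigl(0,\ ((\cosh s)-\nu_i(\sinh s))E_i\bigr)
\]
for an eigenbasis $E_i$ of $A_\eta$, together with a nonzero time component for $d\Phi_{(s,q)}(v,0)$ at regular points of $f$. What that longer computation buys is not the lemma but the \emph{next} theorem: the coefficients $(\cosh s)-\nu_i(\sinh s)$ are exactly the warping factors appearing in the degenerate multiply warped metric displayed immediately afterward. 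Your route---checking invertibility only along $\{0\}\times U$ (where you nicely identify $d\Phi_{(0,q)}(\partial_s)=\sqrt{2}\,\xi$) and then invoking continuity, or alternatively factoring $\Phi=G\circ\psi$ through the normal exponential map of $S_t\subset F$ and the graph diffeomorphism $G$---is conceptually cleaner for the local diffeomorphism claim alone, but would still require a separate pullback computation to recover those metric coefficients.
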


\begin{proof}
We give the proof in the case $\bar c=-1$; the other cases are similar. Hence $\bar M_{\bar c}=-I\times_{\cos t}\mathbb H^{n+1}$. Although for the lemma we only have to calculate $d\Phi_{(0,p)}$, for future purposes we calculate $d\Phi_{(s,q)}:\mathbb R\times T_qS_t\to T_{\Phi(s,q)}M$ at vectors of the form $(v,0)$ and $(0,X)$, where $v\in\mathbb R$ and $X\in T_qS_t$. First note that $\exp_q(s\eta(q))$ is obtained by evaluating at the point $s$ the geodesic departing from $q$ with tangent vector $\eta(q)$. In the hyperboloid model of the hyperbolic space $\mathbb H^{n+1}\subset \mathbb R_1^{n+2}$, we may describe this geodesic as $(\cosh s)q+(\sinh s)\eta(q)$.

With this setting, we have
\[
d\Phi_{(s,q)}(v,0)=\left.\frac{d}{du}(f(\exp_q(((s+uv)\eta(q))),\exp_q((s+uv)\eta(q)))\right\vert_{u=0}
\]
but
\begin{eqnarray*}
\left.\frac{d}{du}\exp_q((s+uv)\eta(q))\right\vert_{u=0} & = & \left.\frac{d}{du}(\cosh(s+uv)q+\sinh(s+uv)\eta(q))\right\vert_{u=0} \\[0.2cm]
 & = & v((\sinh s)q+(\cosh s)\eta(q)).
\end{eqnarray*}

Also, we have
\[
\left.\frac{d}{du}f(\exp_q((s+uv)\eta(q)))\right\vert_{u=0} = v\, df_{\exp_q(s\eta(q))}((\sinh s)q+(\cosh s)\eta(q))
\]
and then
\[
d\Phi_{(s,q)}(v,0)=v(df_{\exp_q(s\eta(q))}((\sinh s)q+(\cosh s)\eta(q)), (\sinh s)q+(\cosh s)\eta(q)),
\]
the first coordinate being different from zero at a non-critical point of $f$ (and $v\ne 0$).

Now we evaluate $d\Phi_{(s,q)}$ at vectors of the form $(0,X)$. Let $\gamma(u)$ be a curve in $S_t$ such that $\gamma(0)=q$ and $\gamma'(0)=X$. Then
\[
d\Phi_{(s,q)}(0,X)=\left.\frac{d}{du}(f(\exp_{\gamma(u)}(s\eta(\gamma(u)))),\exp_{\gamma(u)}(s\eta(\gamma(u))))\right\vert_{u=0};
\]
we have
\begin{eqnarray*}
\left.\frac{d}{du}\exp_{\gamma(u)}(s\eta(\gamma(u)))\right\vert_{u=0} & = & \left.\frac{d}{du}((\cosh s)\gamma(u)+(\sinh s)\eta(\gamma(u)))\right\vert_{u=0} \\
 & = & (\cosh s)X-(\sinh s)A_\eta X;
\end{eqnarray*}
from this we obtain
\[
\left.\frac{d}{du}f(\exp_{\gamma(u)}(s\eta(\gamma(u))))\right\vert_{u=0}=((\cosh s)X-(\sinh s)A_\eta X)(f)=0,
\]
the last equality being consequence that $X$ is tangent to a level set of $f$. In short,
\[
d\Phi_{(s,q)}(0,X)=(0, (\cosh s)X-(\sinh s)A_\eta X ).
\]

Taking an orthonormal basis $E_1,\dots,E_n$ of $T_qS_t$ consisting of eigenvectors of $A_\eta$ with eigenvalues $\nu_i$, we have
\[
d\Phi_{(s,q)}(0,E_i)=(0, ((\cosh s)-\nu_i(\sinh s)) E_i ).
\]

Since these vectors along with $d\Phi_{(s,q)}(v,0)$ ($v\ne 0$) are linearly independent, $d\Phi_{(s,q)}$ is an isomorphism and hence $\Phi$ is a local diffeomorphism. 
\end{proof}

We calculate the coefficients of the metric $g$ of $M$ in terms of the above diffeomorphism $\Phi$. First note that
\[
g(d\Phi_{(s,q)}(v,0),d\Phi_{(s,q)}(v,0))=v^2\langle \eta,\eta\rangle (-\Vert \grad f\Vert^2+(\varrho\circ f)^2)=0.
\]
since we are working in $-I\times_\varrho F$ and $f$ is transnormal relative to $\varrho$. It is clear that
\[
g(d\Phi_{(s,q)}(v,0),d\Phi_{(s,q)}(0,X))=0;
\]
on the other hand, taking the orthonormal eigenvector field $E_i$,
\[
g(d\Phi_{(s,q)}(0,E_i),d\Phi_{(s,q)}(0,E_j))=((\cosh s)-\nu_i(\sinh s)) ((\cosh s)-\nu_j(\sinh s)) \delta_{ij};
\]

The above analysis may be summarized as follows.

\begin{teorema}
Let $(M,g,S^*(TM))$ be a null isoparametric hypersurface of the Lorentzian warped product $-I\times_\varrho F$ with constant curvature $\bar c$, $M$ given as the graph of a transnormal function $f$. Each point $(t,p)\in M$ such that $t$ is a regular value of $f$ has a neighborhood in $M$ diffeomorphic to $(-\varepsilon,\varepsilon)\times S_t$, where $S_t=f^{-1}(t)$.

In particular, let $\lambda_1(t),\dots,\lambda_{l(t)}(t)$ be the distinct screen principal curvatures of $S_t$ with multiplicities $m_1(t),\dots,m_{l(t)}(t)$. If the number $l$ and the multiplicities are constant in a neighborhood of $(t,p)$, $M$ is diffeomorphic to a product $M_0\times M_1\times\cdots\times M_l$, where $\dim M_0=1$ and $\dim M_i=m_i$.

Moreover, the metric in $M$ may be expressed as a degenerate multiply warped product metric given by
\[
ds^2=
\begin{cases}
0\,dt^2+\sum\limits_{i=1}^l\left( \sum\limits_{j=1}^{m_1}((\cosh s)-\nu_i(\sinh s))^2 dx_{ij}^2 \right),&\bar c=-1: \\[0.5cm]
0\,dt^2+\sum\limits_{i=1}^l\left( \sum\limits_{j=1}^{m_1}(1-\nu_i s)^2 dx_{ij}^2 \right), & \bar c=0; \\[0.5cm]
0\,dt^2+\sum\limits_{i=1}^l\left( \sum\limits_{j=1}^{m_1}((\cos s)-\nu_i(\sin s))^2 dx_{ij}^2 \right), & \bar c=1;
\end{cases}
\]
where
\[
\nu_i=\sqrt{2}\lambda_i+\frac{\varrho'}{\varrho}.
\]
\end{teorema}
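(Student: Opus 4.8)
The plan is to assemble the statement from the two computations carried out just above, upgrading the pointwise analysis into a global product description under the stated regularity hypotheses. The first assertion is immediate: the preceding Lemma shows that $\Phi$ from \eqref{eq:parametrizacion} is a local diffeomorphism whenever $t$ is a regular value of $f$, and since $\Phi$ is defined on $(-\varepsilon,\varepsilon)\times U$ with $U\subset S_t$, it furnishes the desired neighborhood in $M$ diffeomorphic to $(-\varepsilon,\varepsilon)\times S_t$. So the work reduces to (i) deriving the three coordinate expressions for $g$ and (ii) extracting the multiply warped product structure.

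Second, I would obtain the metric in all three curvature cases by repeating verbatim the derivative computation already displayed for $\bar c=-1$, replacing only the normal geodesic. For $\bar c=0$ the geodesic through $q$ in the direction $\eta(q)$ is the straight line $q+s\eta(q)$, giving $d\Phi_{(s,q)}(0,X)=(0,X-s\,A_\eta X)$; for $\bar c=1$ it is the great circle $(\cos s)q+(\sin s)\eta(q)$, giving $d\Phi_{(s,q)}(0,X)=(0,(\cos s)X-(\sin s)A_\eta X)$. Evaluating on the orthonormal eigenframe $E_1,\dots,E_n$ of $A_\eta$ (with $A_\eta E_i=\nu_i E_i$) then yields $d\Phi_{(s,q)}(0,E_i)=(0,\sigma_i(s)E_i)$ with $\sigma_i(s)$ equal to $1-\nu_i s$ and $(\cos s)-\nu_i(\sin s)$ respectively, while the degenerate direction $d\Phi_{(s,q)}(v,0)$ remains null exactly as before, producing the $0\,dt^2$ term. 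The decisive point is that the screen isoparametric hypothesis, together with $\varrho$ being constant along each $S_t$, makes $\nu_i=\sqrt2\,\lambda_i+\varrho'/\varrho$ constant; hence each $\sigma_i$ depends on the geodesic parameter alone, and the off-diagonal terms vanish because the $E_i$ are $g$-orthonormal eigenvectors of the symmetric operator $A_\eta$. Writing the induced metric of $S_t$ in the coframe dual to the adapted frame then gives the three displayed expressions.

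Third, for the product statement I would invoke the classical structure theory of isoparametric hypersurfaces in space forms (as in \cite{MR3408101}): under the assumption that $l$ and the multiplicities $m_i$ are locally constant, each curvature distribution $T_{\nu_i}=\ker(A_\eta-\nu_i\,\mathrm{id})$ on $S_t$ is integrable, with integral manifolds of dimension $m_i$. These leaves provide the factors $M_1,\dots,M_l$, while the normal geodesic parameter $s$ supplies the one-dimensional factor $M_0$; composing the resulting local product chart on $S_t$ with $\Phi$ gives the diffeomorphism $M\cong M_0\times M_1\times\cdots\times M_l$. The block form of $g$ computed in the previous step shows that on this product the metric is $0\,dt^2+\sum_i\sigma_i(s)^2 g_i$ with $g_i$ the leaf metric, i.e.\ a degenerate multiply warped product with warping functions $\sigma_i$.

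Finally, I expect the main obstacle to be precisely step (iii): the integrability and splitting of the curvature distributions, which is where the constant-multiplicity hypothesis is indispensable and which rests on the Codazzi equation for $S_t$ in the constant curvature slice $\{t\}\times F$. The diffeomorphism $\Phi$ only organizes the degenerate normal direction, and the pointwise metric coefficients are routine; the genuine content is checking that the leaf distributions are involutive and that, along the normal geodesics, the metric on each leaf rescales by the single-variable factor $\sigma_i(s)^2$ without mixing distributions, so that the local product is in fact a multiply warped product. One should also flag a mild notational caveat: the symbols $dx_{ij}$ denote the orthonormal coframe dual to the adapted eigenframe rather than genuine coordinate differentials, unless the leaves happen to be flat.
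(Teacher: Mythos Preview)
Your proposal is correct and follows the paper's approach exactly: the paper presents this theorem as a summary of the preceding Lemma and metric computations (introducing it with ``The above analysis may be summarized as follows'' and giving no separate proof), and you reproduce precisely those ingredients while spelling out the $\bar c=0,1$ cases that the paper leaves as ``similar.'' If anything, you are more careful than the paper about the product decomposition $M_0\times M_1\times\cdots\times M_l$, which the paper simply asserts while you correctly flag that it rests on the integrability of the curvature distributions of $S_t$ in the constant-curvature slice---a point the paper leaves implicit---and your caveat about the $dx_{ij}$ being coframe elements rather than genuine coordinate differentials is well taken.
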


\section*{Acknowledgements}

The first author is grateful with CONACYT and Facultad de Ciencias, UNAM for the financial support and warm hospitality during the sabbatical year in which this work was developed.

\bibliographystyle{plain}
\bibliography{references3}

\end{document}